\newcommand{\vol}{{\rm vol}}
\date{}
\begin{document}
\title{\Large{Common adversaries form alliances: modelling complex networks via anti-transitivity}\thanks{Research supported by grants from NSERC and Ryerson University.}}
\author{Anthony Bonato \inst{1}\and Ewa Infeld \inst{1}\and \\ Hari Pokhrel \inst{1} \and Pawe\l{} Pra\l{}at  \inst{1}}
\institute{Ryerson University}
\maketitle
\begin{abstract}
Anti-transitivity captures the notion that enemies of enemies are friends, and arises naturally in the study of adversaries in social networks and in the
study of conflicting nation states or organizations. We present a simplified, evolutionary model for anti-transitivity influencing link formation in complex networks, and analyze the model's
network dynamics. The Iterated Local Anti-Transitivity (or ILAT) model creates anti-clone nodes in each time-step, and joins anti-clones to the parent node's non-neighbor set. The graphs generated by
ILAT exhibit familiar properties of complex networks such as densification, short distances (bounded by absolute constants), and bad spectral expansion. We determine the cop and domination number
for graphs generated by ILAT, and finish with an analysis of their clustering coefficients. We interpret these results within the context of real-world complex networks and present open
problems.
\end{abstract}

\section{Introduction}\label{intro}

Transitivity is a pervasive and folkloric notion in social networks, summarized in the adage that ``friends of friends are more likely friends''. A simplified, deterministic model for transitivity
was posed in \cite{ilt1,ilt}, where nodes are added over time, and each node's \emph{clone} is adjacent to it and all of its neighbors. The resulting Iterated Local Transitivity (or ILT) model, while
elementary to define, simulates many properties of social and other complex networks. For example, as shown in \cite{ilt}, graphs generated by the model densify over time, have the small world
property (that is, small distances and high local clustering), and exhibit bad spectral expansion. For further properties of the ILT model, see \cite{BJR,mason}

Complex networks contain numerous mechanisms governing link formation, however. Structural balance theory in social network analysis cites several mechanisms to complete triads \cite{sack}. Another
folkloric adage is that ``enemies of enemies are more likely friends''. Adversarial relationships may be modelled by non-adjacency, and so we have the resulting closure of the triad as described in
Figure~\ref{at}.

\begin{figure}[h!]
\begin{centering}
    \includegraphics{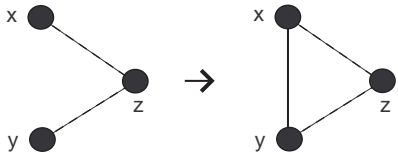}
    \caption{Nodes $x$ and $y$ share $z$ as a mutual adversary, and so form an alliance.}
    \label{at}
    \end{centering}
\end{figure}

Such triad closure is suggestive of an analysis of adversarial relationships between nodes as one mechanism for link formation. For instance, in social networks, we may consider both friendship ties and enmity (or rivalry) between actors. We may also consider
opposing networks of nation states or rival organizations, and consider alliances formed by mutually shared adversaries.  See \cite{guo} for a recent study using the spatial location of cities to form
an interaction network, where links enable the flow of cultural influence, and may be used to predict the rise of conflicts and violence. Another example comes from market graphs, where the nodes are stocks, and stocks are
adjacent as a function of their correlation measured by a threshold value $\theta \in (0,1).$ Market graphs were considered in the case of negatively correlated (or adversarial) stocks, where stocks are adjacent if $\theta < \alpha,$ for some positive $\alpha$; see \cite{market}.

In the present paper, we consider a simplified, deterministic model for anti-transitivity in complex networks. The Iterated Local Anti-Transitivity (or ILAT) model duplicates nodes in each time-step
by forming \emph{anti-clone} nodes, and joins them to the parent node's non-neighbor set. We give a precise definition of the model below in the next section. Perhaps unexpectedly, graphs generated
by ILAT model exhibit familiar properties of complex networks such as densification, small world properties, and bad spectral expansion (analogously to, but different from properties exhibited by
ILT).

We organize the discussion in this extended abstract as follows. In Section~\ref{model}, we give a precise definition of the ILAT model and examine its basic properties. We prove that graphs
generated by ILAT densify over time. We derive the density of ILAT graphs, and consider their degree distribution. In Section~\ref{distance}, we prove that ILAT graphs have diameter 3 for
sufficiently large time-steps (regardless of the initial graph). Further, we determine after several time-steps, ILAT graphs have cop number 2 and domination number 3. We include in
Section~\ref{cluster} an analysis of the clustering coefficients and provide upper and lower bounds. The final section interprets our results within real-world complex networks, and presents
open problems derived from the analysis of the model.

We consider undirected graphs throughout the paper. For background on graph theory, the reader is directed to~\cite{west}. Additional background on complex networks may be found in the book
\cite{bbook}.

\section{The ILAT model}\label{model}

The Iterated Local Anti-Transitivity (or ILAT) model generates a sequence $(G_t:t\ge 0)$ of graphs over a sequence of discrete time-steps. The one parameter of the model is the initial graph $G_0$.
Assuming the graph at time $G_t$ is defined, we define $G_{t+1}$ as follows. For a given node $x \in V(G_t)$, define its \emph{anti-clone} $x'$ as a new node adjacent to non-neighbors of $x.$ More
precisely, $x'$ is adjacent to all nodes in $N^c(x)$, where $N^c(x) = \{y \in V(G_t): xy \not \in E(G)\}.$ To form $G_{t+1}$, to each node $x$ add its anti-clone $x'.$

The intuition behind that model is that the anti-clone $x'$ is adversarial with $x$, and non-neighbors of $x$ (that is, its own adversaries) become allied with $x'.$ This process, therefore,
iteratively applies the triad closure in Figure~\ref{at}. Note that the number of nodes doubles in each time-step, and the set of anti-clones forms an independent set. See Figure~\ref{step} for an
example.

\begin{figure}[h!]
\begin{centering}
    \includegraphics[scale=0.2]{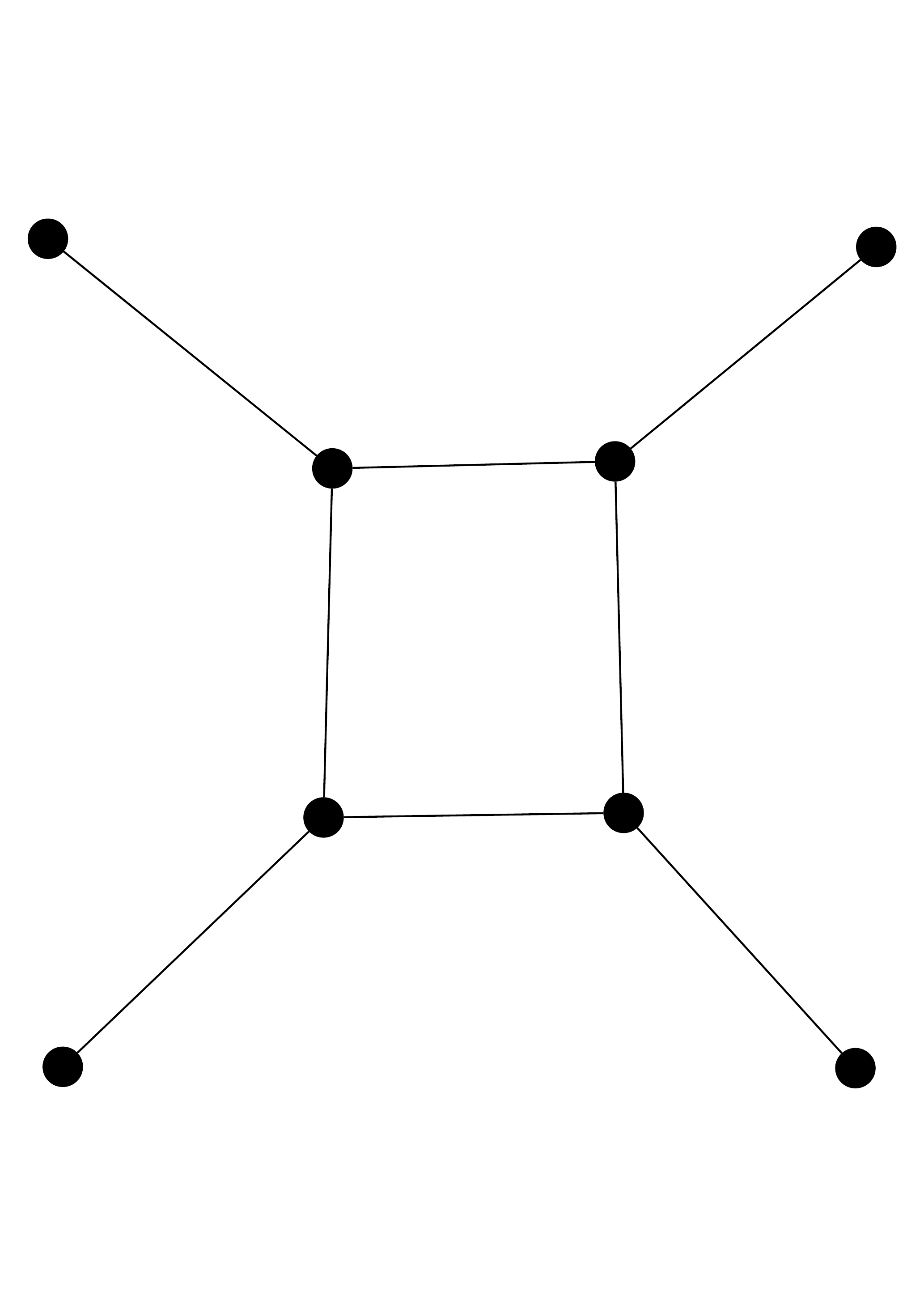}
    \includegraphics[scale=0.2]{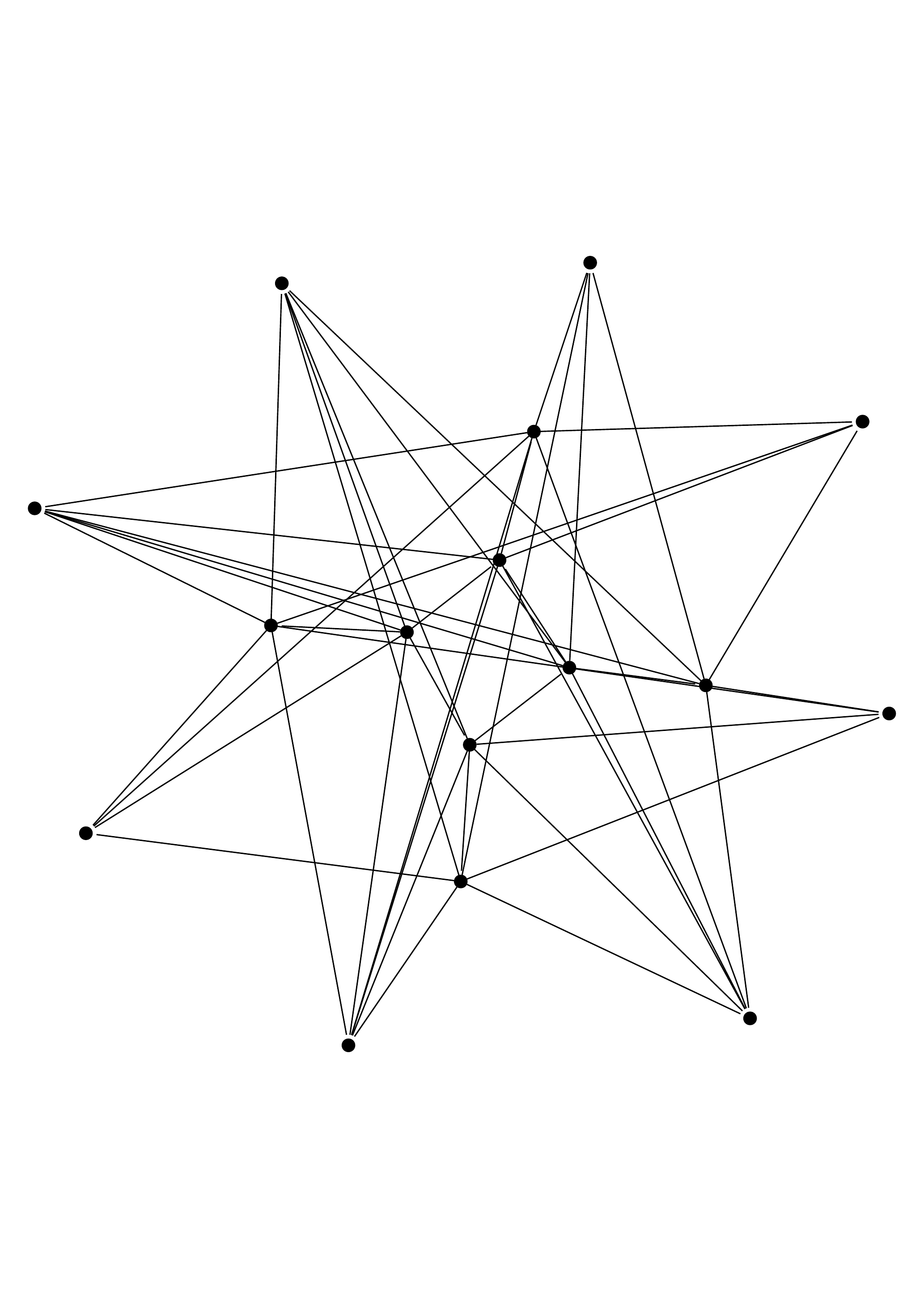} \\
    \includegraphics[scale=0.2]{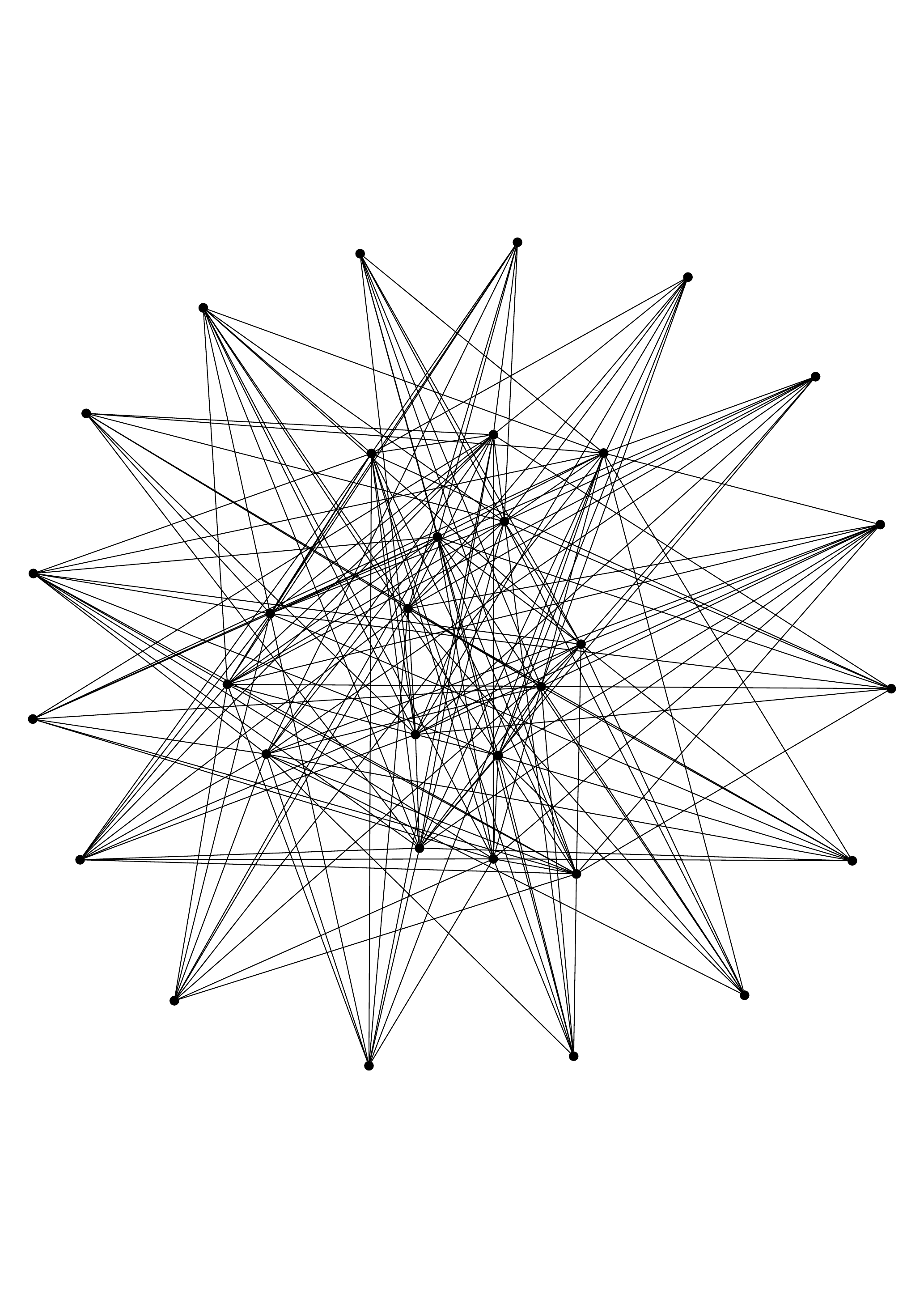}
    \includegraphics[scale=0.2]{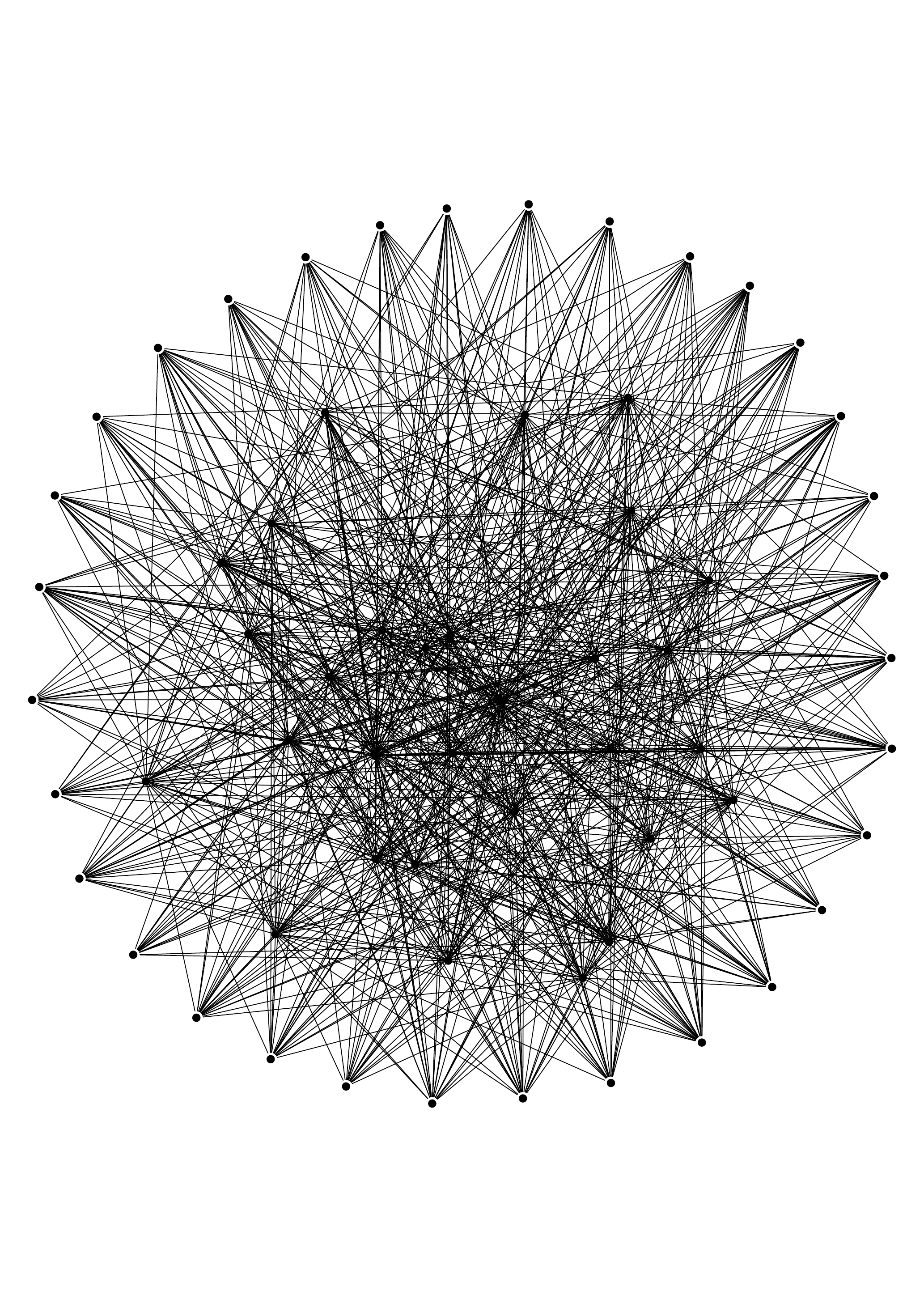}
    \caption{An example of the first four time-steps of the ILAT model, where the initial graph is the four-cycle $C_4.$}
\label{step}
    \end{centering}
\end{figure}

We introduce some simplifying notation. Let $n_t$ be the number of nodes at time $t$, $e_t$ be the number of edges at time $t,$ and the degree of a node $x$ at time $t$ will be denoted $\deg_t(x)$.
We define the \emph{co-degree} of $x$ at time $t$ as ${\deg^c}_t(x)=n_t-\deg_t(x)-1.$ It is straightforward to note that for $t\ge 1,$ $n_t = 2n_{t-1}=2^tn_0.$ Further, for an existing node $x\in
V(G_t)$, $\deg_{t+1}(x) = n_t-1$ and $\deg_{t+1}(x') = {\deg^c}_t(x).$

The ILAT model generates densification as we prove next. While the proof is elementary, the result is not a priori obvious from the model. One interpretation is that in networks where
anti-transitivity is pervasive, we expect that many alliances form in the network over time.
\begin{theorem}\label{denser}
The ratio $e_t/n_t$ tends to infinity with $t.$
\end{theorem}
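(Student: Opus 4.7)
The plan is to compute $e_{t+1}$ exactly via a handshake argument, then exploit the crude bound $e_t \le \binom{n_t}{2}$ to show $e_{t+1}/n_{t+1}$ grows without bound. The key input is the degree information already stated in the excerpt: every old node $x \in V(G_t)$ satisfies $\deg_{t+1}(x) = n_t-1$ (since at time $t+1$ it is joined to each old neighbor and to the anti-clone of each old non-neighbor), and each anti-clone $x'$ satisfies $\deg_{t+1}(x') = \deg^c_t(x) = n_t - 1 - \deg_t(x)$.

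First I would sum degrees over all $2n_t = n_{t+1}$ vertices of $G_{t+1}$:
\begin{equation*}
2 e_{t+1} = \sum_{x \in V(G_t)} (n_t-1) + \sum_{x \in V(G_t)} \bigl(n_t - 1 - \deg_t(x)\bigr) = 2 n_t(n_t-1) - 2e_t.
\end{equation*}
This gives the clean recurrence
\begin{equation*}
e_{t+1} + e_t = n_t(n_t-1).
\end{equation*}

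Next, since $G_t$ is a simple graph on $n_t$ vertices, trivially $e_t \le \binom{n_t}{2} = n_t(n_t-1)/2$. Plugging this bound into the recurrence yields
\begin{equation*}
e_{t+1} \;\ge\; n_t(n_t-1) - \frac{n_t(n_t-1)}{2} \;=\; \frac{n_t(n_t-1)}{2}.
\end{equation*}
Dividing by $n_{t+1} = 2n_t$ gives $e_{t+1}/n_{t+1} \ge (n_t-1)/4$. Since $n_t = 2^t n_0 \to \infty$, we conclude $e_t/n_t \to \infty$, as required.

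There is really no main obstacle here beyond carefully setting up the handshake sum; once the recurrence $e_{t+1}+e_t = n_t(n_t-1)$ is in hand, the conclusion is immediate from the universal edge bound. If a sharper asymptotic is desired (for the density discussion promised in the introduction), one can solve the recurrence explicitly by writing $f_t = e_t/n_t$ and iterating $f_{t+1} = (n_t - 1 - f_t)/2$, which yields $f_t \sim n_t/3$, but this refinement is not needed for the statement at hand.
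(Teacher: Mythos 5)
Your proof is correct. You arrive at the same key recurrence as the paper, $e_{t+1}+e_t=n_t(n_t-1)$ (equivalently $e_{t+1}=n_t^2-n_t-e_t$), though you derive it by a handshake sum over $V(G_{t+1})$ using the degree formulas, whereas the paper counts the newly created edges directly as $\sum_{x\in V(G_t)}\deg^c_t(x)$; these are the same computation in different clothing. Where you genuinely diverge is in how the recurrence is exploited: the paper solves it explicitly (obtaining $e_t\sim n_t^2/5$, which also feeds the later density result), while you sidestep solving it altogether via the trivial bound $e_t\le\binom{n_t}{2}$, which immediately gives $e_{t+1}\ge n_t(n_t-1)/2$ and hence $e_{t+1}/n_{t+1}\ge (n_t-1)/4\to\infty$. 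Your finish is cleaner and avoids the paper's somewhat garbled closed-form expression, at the cost of not producing the sharp constant. One small correction to your closing aside: iterating $f_{t+1}=(n_t-1-f_t)/2$ with the ansatz $f_t=c\,n_t$ forces $2c=\tfrac12-\tfrac{c}{2}$, so $f_t\sim n_t/5$, not $n_t/3$; this is consistent with the limiting density $2/5$ established later in the paper, since $e_t\sim\tfrac25\binom{n_t}{2}$ gives $e_t/n_t\sim n_t/5$. As you explicitly do not rely on that refinement, the main argument stands as written.
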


\begin{proof} Note that by the definition of the model, we have that

\begin{eqnarray*}
e_{t+1} &=& e_t + \sum_{x\in V(G_t)} {\deg_t}^c(x) \\
& = & e_t + {n_t}^2 - 2e_t-n_t \\
& = & {n_t}^2 - e_t - n_t.
\end{eqnarray*}

Solving this recurrence, we derive that

\begin{eqnarray*}
e_{t} &=& {n_{t-1}}^2\left(\frac{4}{5}\right)\left(1-\left({-\frac{1}{4}}^{t-1}\right)\right)- n_{t-1}\left(\frac{2}{3}\right)\left(1-\left({-\frac{1}{2}}^{t-1}\right)\right) \\
& = & 2^{2t}\left(\frac{1}{5}\right)\left(1-\left({-\frac{1}{4}}^{t-1}\right)\right)(1-o(1)).
\end{eqnarray*}

Hence, we obtain that $e_t/n_t = \Omega (2^t)$.  \qed \end{proof}

\subsection{Degree distribution and density}

We next consider the degree distribution of the graph $G_t.$ For each node $x$ that at time $t$, we create its anti-clone $x'$ at time $t+1.$ Then at time $t+2$ we create $x''$ from $x$ and $(x')'$
from $x'.$ For any node $x$ that was created at a time-step $k<t$, we have that
$$\deg_t(x)=\frac{n_t}{2}-2.$$ To see this, notice that the graph at time step $t\geq 1$ can be partitioned into two halves: nodes $y$ that existed at time step $t-1$, and their newly created
clones $y'$. For each pair $y,y'$, we have that $x$ is adjacent to exactly one node in this pair. \bigskip

If $t>1$, then of the newly created nodes, half are anti-clones $x'$ of nodes $x$ that have already existed at time $t-2$, and therefore, their degree at time $t-1$ was
$$\deg_{t-1}(x)=\frac{n_{t-1}}{2}-2=\frac{n_t}{4}-2.$$ These anti-clones have at time $t$, $$\deg_t(x')=n_{t-1}-\deg_{t-1}(x)=\frac{n_t}{4}+2.$$ Similarly, if $t>2$ then there are $\frac{n_t}{8}$ nodes
$y''$ created at time $t$ that are anti-clones of nodes $y'$ created at time $t-1$ from nodes $y$ at least as old as $t-3$. Then since by the previous argument $\deg_{t-1}(y')=\frac{n_{t-1}}{4}+2,$
we have that $$\deg_t(y'')=\frac{3n_t}{8}-2.$$ If we continue in this fashion, then by induction we will find that at time $t$, we have that $2^{-k}n_t$ nodes of degree $a_k+(-1)^{k-1}2$ provided that for $k<t$:
$$a_1=\frac{n_t}{2}-2,$$
and
$$a_k=\frac{1}{2}-\frac{a_{k-1}}{2}.$$

From this discussion, we can obtain a limiting density as $t\rightarrow\infty$. Let $D_t$ be the density of $G_t;$ that is, $D_t = \frac{e_t}{\binom{n_t}{2}}.$ Parallel with Theorem~\ref{denser},
The ILAT model generates quite dense graphs.
\begin{theorem}\label{limit}
As $t\rightarrow \infty$, we have that $D_t \rightarrow 2/5.$
\end{theorem}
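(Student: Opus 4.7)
The plan is to leverage the explicit formula for $e_t$ already derived in the proof of Theorem~\ref{denser} and simply divide by $\binom{n_t}{2}$, taking the limit term by term. No new combinatorial work is required; the result is essentially a corollary of the densification calculation.

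First, I would recall from the proof of Theorem~\ref{denser} the closed form
\[
e_t = n_{t-1}^2 \cdot \frac{4}{5}\left(1 - \left(-\tfrac{1}{4}\right)^{t-1}\right) - n_{t-1} \cdot \frac{2}{3}\left(1 - \left(-\tfrac{1}{2}\right)^{t-1}\right).
\]
Since $n_{t-1} = n_t/2$, the first term equals $\tfrac{n_t^2}{5}\bigl(1 - (-1/4)^{t-1}\bigr)$, and the second term is of order $n_t$, hence negligible compared to $n_t^2$. As $t \to \infty$ the geometric correction $(-1/4)^{t-1}$ tends to $0$, giving
\[
e_t = \frac{n_t^2}{5}\,(1 + o(1)).
\]

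Next, using $\binom{n_t}{2} = \tfrac{n_t(n_t-1)}{2} = \tfrac{n_t^2}{2}(1 + o(1))$ (which follows from $n_t = 2^t n_0 \to \infty$), I would compute
\[
D_t = \frac{e_t}{\binom{n_t}{2}} = \frac{\tfrac{n_t^2}{5}(1+o(1))}{\tfrac{n_t^2}{2}(1+o(1))} = \frac{2}{5}(1 + o(1)),
\]
which yields the claimed limit $D_t \to 2/5$.

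There is essentially no obstacle: the only care needed is in verifying that the lower-order terms (the $n_{t-1}$ contribution to $e_t$ and the $-1$ coming from $\binom{n_t}{2}$ versus $n_t^2/2$) are genuinely $o(n_t^2)$, which is immediate since $n_t$ grows exponentially. If one preferred an entirely self-contained derivation, one could alternatively divide the edge recurrence $e_{t+1} = n_t^2 - e_t - n_t$ from Theorem~\ref{denser} through by $\binom{n_{t+1}}{2} \sim 2 n_t^2$ to obtain the fixed-point equation $D = (1 - D/2)/2$ in the limit, whose unique solution is $D = 2/5$; this provides a quick sanity check on the constant.
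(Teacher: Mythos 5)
Your proof is correct, but it takes a genuinely different route from the paper's. The paper proves Theorem~\ref{limit} via the degree distribution: it sums $\sum_i 2^{-i} a_i$ over the degree classes, derives a recurrence for $b_i = 2^{-i}a_i$, and manipulates partial sums to extract the value $2/5$. You instead read the answer directly off the closed form for $e_t$ obtained in the proof of Theorem~\ref{denser}: since $e_t = \tfrac{n_t^2}{5}(1+o(1))$ and $\binom{n_t}{2} = \tfrac{n_t^2}{2}(1+o(1))$, the ratio tends to $2/5$. This is cleaner and shorter, and it has a real advantage over the paper's remark following its proof (the ``alternative way''), which obtains the fixed point $D=(1-D/2)/2$ only under the unproved assumption that $\lim_t D_t$ exists: your main argument needs no such assumption, because the explicit solution of the linear recurrence $e_{t+1}=n_t^2-e_t-n_t$ already pins down the leading term. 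One small caveat worth noting: the exact constants in the paper's displayed closed form for $e_t$ depend on the initial condition and are not quite right for arbitrary $G_0$, but this does not affect your argument, since the general solution is $e_t = \tfrac{n_t^2}{5} - \tfrac{n_t}{3} + C(-1)^t$ with $C$ a constant, and the lower-order terms are $o(n_t^2)$ in every case. The paper's degree-distribution proof, for its part, buys extra structural information (the full limiting degree sequence), which your shortcut does not recover.
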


\begin{proof} Consider the sequence below, which describes the proportion of the whole graph represented by nodes of a certain
degree and the fraction of the nodes of the graph these particular nodes are adjacent to:
$$\frac{1}{2}\frac{1}{2}+\frac{1}{4}\frac{1}{4}+\frac{1}{8}\frac{3}{8}+\dots +\frac{1}{2^i}a_i+\dots.$$ Hence, $$\lim_{t\rightarrow \infty}D_t =  \sum_{i=1}^{\infty}\frac{1}{2^i}a_i=\sum_{i=1}^\infty b_i ,$$
where
$$b_i=\frac{1}{2^i}\frac{1-a_{i-1}}{2}=\frac{1}{2^{i-1}}\left(\frac{1}{4}-\frac{a_i}{4}\right)=\frac{1}{4}\left(\frac{1}{2^{i-1}}-b_{i-1}\right),$$ with $b_0=0$ and $b_1=\frac{1}{4}$.

We may now find the limiting density of the graph as $t\rightarrow\infty$:
$$\lim_{t\rightarrow \infty}D_t = \sum_{i=1}^\infty b_i=\lim_{k\rightarrow\infty} \sum_{i=1}^k b_i,$$ where $$ \sum_{i=1}^k b_i =\sum_{i=1}^k
\frac{1}{4}\left(\frac{1}{2^{i-1}}-b_{i-1}\right)=\frac{1}{4}\sum_{i=1}^k\frac{1}{2^{i-1}}-\frac{1}{4}\sum_{i=1}^kb_{i-1}. $$ Therefore, we have that
$$\sum_{i=1}^kb_i+\frac{1}{4}\sum_{i=1}^kb_{i-1}=\frac{1}{4}\sum_{i=1}^k\frac{1}{2^{i-1}}=\frac{1}{2}, $$ and so $\frac{5}{4}\sum_{i=1}^kb_i-b_k=\frac{1}{2}.$ As $b_k = o(1)$ we have that
 $$\lim_{k\rightarrow \infty}\sum_{i=1}^kb_i= \frac{2}{5},$$ and the proof follows.
 \qed \end{proof}

As an alternative way of obtaining the limiting density, suppose for the sake of the argument that a limiting density of $G_t$ as $t\rightarrow\infty$ exists. Then we have that
$$e_t=e_{t-1}+2\left({n_{t-1} \choose 2}-e_{t-1}\right),$$ since for every pair of non-adjacent nodes $x,y$ in $G_{t-1}$, two new edges are created: $xy'$ and $x'y$. Hence, if
 the limit exists: $$\frac{2{n_{t-1} \choose 2}-e_{t-1}}{{n_t\choose 2}}=\frac{e_{t-1}}{{n_{t-1}\choose 2}}, $$ and
$$\frac{2{n_{t-1} \choose 2}}{{n_t\choose 2}}=\frac{e_{t-1}}{{n_{t-1}\choose 2}}+\frac{e_{t-1}}{{n_t\choose 2}}, $$ for large $t$,
${n_t\choose 2}\sim 4{n_{t-1}\choose 2}$. Thus, we have that $\frac{1}{2}=\frac{5e_{t-1}}{4{n_{t-1}\choose 2}},$ and we find that $D_{t-1}=\frac{e_{t-1}}{{n_{t-1}\choose 2}}=\frac{2}{5}.$

\section{Distances and graph parameters}\label{distance}

The distances within graphs generated by ILAT become very small, with diameter $3$. Hence, highly anti-transitive networks exhibit short paths between nodes; this occurs at time-step $t=2$,
regardless of the starting diameter of $G_0.$

\begin{theorem} \label{diam}
Let $t\geq 2$, then the diameter $\mathrm{diam}(G_t)$ of $G_t$ is 3.
\end{theorem}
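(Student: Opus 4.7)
The plan is to establish both $\mathrm{diam}(G_t) \geq 3$ and $\mathrm{diam}(G_t) \leq 3$ for $t \geq 2$.

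For the lower bound, I would exhibit a pair at distance at least $3$. A natural candidate is any $x \in V(G_{t-1})$ together with its newly-created anti-clone $x'$ from step $t$. By construction $xx' \notin E(G_t)$, so it suffices to show they have no common neighbor in $G_t$. Such a neighbor $w$ cannot be a newly-created anti-clone, since that set is independent; so $w \in V(G_{t-1})$, and adjacency of $w$ to $x$ in $G_t$ forces $w \in N_{t-1}(x)$, while adjacency to $x'$ forces $w$ to be a non-neighbor of $x$ in $G_{t-1}$. These conditions are mutually exclusive, so $d_{G_t}(x,x') \geq 3$.

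For the upper bound, take any non-adjacent $u,v \in V(G_t)$. The central observation is that if two vertices $a,b \in V(G_{t-1})$ share a common non-neighbor $z$ in $G_{t-1}$, then the newly-created anti-clone $z'$ is a common neighbor of $a$ and $b$ in $G_t$, giving distance $2$; dually, any common neighbor of $a$ and $b$ in $G_{t-1}$ remains one in $G_t$. I would split cases by whether each of $u,v$ lies in $V(G_{t-1})$ or is a newly-created anti-clone. When both are new, writing $u = x'$ and $v = y'$, the common neighbors of $u$ and $v$ in $G_t$ correspond precisely to common non-neighbors of $x,y$ in $G_{t-1}$; the mixed case reduces analogously by applying the observation to $u$ and the parent of $v$. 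Thus whenever the relevant pair in $V(G_{t-1})$ has either a common neighbor or a common non-neighbor, we obtain distance at most $2$.

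The main obstacle is the degenerate situation in which the relevant pair $x,y \in V(G_{t-1})$ has disjoint neighborhoods that, together with $\{x,y\}$, partition $V(G_{t-1})$, precluding both a common neighbor and a common non-neighbor. To construct a length-$3$ path here, I would exploit that $G_{t-1}$ is itself an ILAT graph (since $t-1 \geq 1$), containing an independent set of anti-clones from step $t-1$ whose adjacencies into $V(G_{t-2})$ are precisely prescribed by the model. Using this rigid structure, I can either locate a crossing edge inside $G_{t-1}$ between the two halves of the partition, or route through an intermediate anti-clone from step $t-1$ or step $t$, yielding the required length-$3$ path. The bulk of the technical work lies in verifying that such a length-$3$ path always exists for $t \geq 2$.
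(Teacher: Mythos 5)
Your overall strategy matches the paper's: the same lower-bound pair $x, x'$ (non-adjacent, and with no common neighbour since a common neighbour would have to be simultaneously a neighbour and a non-neighbour of $x$ in $G_{t-1}$), and the same mechanism for the upper bound, namely that a common non-neighbour $z$ of two vertices of $G_{t-1}$ produces the common neighbour $z'$ in $G_t$. The lower-bound half is complete and correct.

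The genuine gap is that you isolate the ``degenerate situation'' --- the pair $x,y \in V(G_{t-1})$ has neither a common neighbour nor a common non-neighbour --- and then explicitly defer ``the bulk of the technical work'' of producing a length-$3$ path there. That work is never done, so the proposal is a sketch, not a proof. The ingredient you are missing is the quantitative observation the paper uses to dispose of this case by pigeonhole: after one ILAT step every vertex has small degree, because for $s\ge 1$ a vertex of $G_s$ is adjacent to at most one member of each parent/anti-clone pair $\{y,y'\}$, whence $\deg_s(v)\le n_s/2-1$. Thus for $x,y\in V(G_{t-1})$ with $t\ge 2$ we get $|N(x)\cup N(y)\cup\{x,y\}|\le n_{t-1}$, and whenever $x\sim y$ or $N(x)\cap N(y)\ne\emptyset$ the inequality is strict, so a common non-neighbour $z$ exists and $d_{G_t}(x,y)\le 2$ via $z'$ (or via a surviving common neighbour). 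This single degree bound replaces your entire proposed structural analysis of $G_{t-1}$. To be fair, your caution is not baseless: the count is tight exactly when $x\not\sim y$, both attain the maximum degree, and their neighbourhoods partition the rest of $V(G_{t-1})$; this can genuinely occur (e.g.\ $G_0=\overline{K_2}$ even yields a disconnected $G_2$), so the theorem implicitly needs mild assumptions on $G_0$ that the paper glosses over. But absent either the pigeonhole argument or a completed treatment of your degenerate case, the upper bound is not established. Separately, in the mixed case (old $u$ versus new $y'$) a common neighbour requires a neighbour of $u$ that is a non-neighbour of $y$, which can fail; the paper settles there for a length-$3$ path through an old neighbour of $y'$, and your write-up should handle that case explicitly rather than folding it into the deferred analysis.
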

Note that the value $t=2$ in Theorem~\ref{diam} is sharp. For example, we may take $G_0$ to be a path of length 4. Or we may consider an initial graph of $K_3,$ in which case the graph at $t=1$ is
disconnected.

\bigskip

\noindent
\emph{Proof of Theorem~\ref{diam}.} We show first that for $t\geq 1$, the diameter of $G_t$ is at least 3. To see this,
consider the distance between some node $x$ that existed at time $t-1$ and its anti-clone $x'$ created at time $t$. They are not adjacent and have no common neighbors, and so we have that
$d(x,x')\geq 3$.

We next show that for $t\ge 2,$ any two nodes that are not newly created are at most distance 2 apart. For this, let $x, y$ be two distinct nodes that already existed at time $t-1$. Since the node
degree at time $t-1$ is bounded by $n/4-2$, by the pigeonhole principle there is another node $z$ that also existed at $t-1$ that is not adjacent to either of them. Hence, $z'$ is adjacent to both nodes
and so $d(x,y)\leq 2$.

Let $x',\ y'$ be two separate nodes newly anti-cloned from some nodes $x,\ y$. Since the node degree at time $t-1$ is bounded by $\max\{ 0,n/4-2 \}$, by the pigeonhole principle there is another node $z$ that
also existed at $t-1$ that is not adjacent to either $x$ or $y$. Then $z$ is adjacent to both $x'$ and $y'$, and so $d(x',y')\leq 2$. Hence, any two nodes that both newly created are at most distance 2
apart.

The only case we have not considered are pairs of nodes where one is newly created and one is not. But if $t\geq 3$, then every newly created node has a neighbor that is not newly created and vice
versa. Therefore, any such pair can be connected by a path of length at most 3. \qed

\bigskip

For the average distance in ILAT graphs, we would like to show that all but a negligible number of pairs of nodes have distance at most 2. Let $L_t$ denote the average distance at time $t.$ Since we
know the limiting density is 0.4 by Theorem~\ref{limit}, we have for a constant d
$$\lim_{t\rightarrow\infty}L_t=\lim_{t\rightarrow\infty}\left(0.4\cdot 1+\frac{d}{2^t}\cdot 3+\left(1-0.4-\frac{d}{2^t}\right)\cdot 2\right)=1.6.$$

The pairs of nodes we have not considered so far are ones where exactly one node is newly created, but is not a anti-clone of the other. If they are not adjacent, then we would like to know if they
have a common neighbor. Let the node that already existed at time $t-1$ be $x$, and the newly created node be $y'$, cloned from some node $y\not=x$. Nodes $x$ and $y'$ can have a common neighbor unless the
neighborhood of $x$ at time $t-1$ (other than possibly $y$ itself) was a subset of the neighborhood of $y$ at time $t-1$ (which would be the case when $x=y$).

\begin{theorem}
If $x$ and $y$ are nodes of $G_t$ that are not newly created at time $t$, with $t\geq 2$ and $x\not=y$, and it is not the case that both $x$ and $y$ belonged to $G_0$, then $d(x,y')\leq2$.
\end{theorem}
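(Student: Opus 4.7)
My plan is to split on the adjacency of $x$ and $y$ in $G_{t-1}$ and, in the adjacent case, use the parent of whichever endpoint is not in $V(G_0)$ to exhibit a common neighbor of $x$ and $y'$ in $G_t$.

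If $xy \notin E(G_{t-1})$, then $x$ is a non-neighbor of $y$ in $G_{t-1}$, so by the definition of the anti-clone $x \sim y'$ in $G_t$ and $d(x,y') = 1$ immediately. From now on assume $xy \in E(G_{t-1})$. Any common neighbor $z$ of $x$ and $y'$ must lie in $V(G_{t-1})$ (because $y'$ is only adjacent to old vertices), and must satisfy $z \sim x$ in $G_{t-1}$, $z \not\sim y$ in $G_{t-1}$, and $z \neq y$; my goal is to produce such a $z$.

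By the hypothesis, at least one of $x, y$ is not in $V(G_0)$. By symmetry I treat the case $y \notin V(G_0)$, writing $y = y_0^{(s_y)}$ for the parent $y_0 \in V(G_{s_y - 1})$ with $s_y \geq 1$; the parallel case $x \notin V(G_0)$ is handled identically using the parent $x_0$ of $x$. Since an anti-clone is never adjacent to its parent, $y_0 \not\sim y$ and $y_0 \neq y$. Moreover $y_0 \neq x$, for otherwise $y$ would be the anti-clone of $x$ and hence $xy \notin E(G_{t-1})$, a contradiction. This makes $y_0$ the natural candidate: if $xy_0 \in E(G_{t-1})$, then $z = y_0$ satisfies all three requirements.

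The remaining subcase $xy_0 \notin E(G_{t-1})$ is the heart of the argument. I would choose $z$ from one of two backup pools: either a common neighbor of $x$ and $y_0$ in $G_{s_y - 1}$ (which is adjacent to $x$ because old--old edges persist, and non-adjacent to $y$ because an old vertex $w \in V(G_{s_y - 1})$ becomes adjacent to $y$ at step $s_y$ exactly when $w$ is non-adjacent to $y_0$), or an anti-clone $u^{(s_y)}$ with $u \in V(G_{s_y - 1}) \setminus \{x, y_0\}$ and $xu \notin E(G_{s_y - 1})$ (non-adjacent to $y$ since the anti-clones created at $s_y$ form an independent set containing $y$). The main obstacle is showing at least one pool is non-empty: if both were empty, every vertex of $V(G_{s_y - 1}) \setminus \{x, y_0\}$ would have to be a neighbor of $x$ but not of $y_0$, so $x$ would be adjacent to all of $V(G_{s_y - 1})$ except $y_0$ while $y_0$ would be isolated in $G_{s_y - 1}$; the ILAT degree identity $\deg_{r+1}(w) = n_r - 1$ rules out such an extreme imbalance except when $n_{s_y - 1}$ is very small, and the few remaining edge configurations are then checked directly against $t \geq 2$.
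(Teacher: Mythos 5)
Your reduction of the adjacent case to finding $z \in V(G_{t-1})$ with $z \sim x$, $z \not\sim y$ and $z \neq y$ is correct, and in the subcase where $y$ is the \emph{younger} of the two endpoints your candidates (the parent $y_0$ when $x \sim y_0$; otherwise a common neighbour of $x$ and $y_0$ in $G_{s_y-1}$, or an anti-clone created at step $s_y$ and adjacent to $x$) all verify; the last pool is essentially the paper's argument for that subcase. The genuine gap is the claimed symmetry. The target condition is not symmetric in $x$ and $y$: you need a vertex adjacent to $x$ but non-adjacent to $y$. The parent $y_0$ is automatically a non-neighbour of $y$, which is exactly why it is a useful candidate on the $y$-side; but the parent $x_0$ of $x$ is automatically a \emph{non}-neighbour of $x$, so it can never be a common neighbour of $x$ and $y'$, and both backup pools are meaningless when $x$ does not yet exist at time $s_y-1$. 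Hence the entire case in which $x$ is the younger endpoint (in particular $x \notin V(G_0)$ with $y \in V(G_0)$, and more generally $x$ created after $y$) is untouched. The paper needs two further, genuinely different arguments there: if $x$ was created at some step $s \leq t-2$, it takes a vertex $w$ adjacent to $y$ but not to $x$ at step $s$ and uses the anti-clone of $w$ from step $s+1$; and if $x$ was created at step $t-1$ there is no later cloning step left to exploit, so the paper switches to a degree count showing that $x$ and $y'$ each have more than $n/8$ neighbours among the $n/4$ vertices of age at least $t-2$, and invokes pigeonhole. Your proposal contains no substitute for either of these.

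A smaller point: in your ``both pools empty'' scenario ($y_0$ isolated in $G_{s_y-1}$ and $x$ adjacent to everything else) the plan to ``check the few remaining configurations directly'' cannot be carried out, because some of those configurations genuinely violate the conclusion. For $G_0 = K_3 \cup K_1$ with $x$ a triangle vertex and $y$ the time-$1$ anti-clone of the isolated vertex, one checks that $N(x)\setminus (N(y)\cup\{y\})$ is empty in $G_{t-1}$ for every $t\geq 2$, so $d(x,y')=3$. The paper's own proof silently ignores such degenerate initial graphs (the statement is only used asymptotically, to pin down the average distance), so this defect is inherited rather than introduced by you, but it does mean that subcase cannot be closed as you describe.
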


\begin{proof} Unless $x$ and $y$ are adjacent, we have that $d(x,y')=1$. So suppose that $x$ and $y$ are adjacent. Suppose that they did not both belong to the initial graph $G_0$. Since they are adjacent, one of them was
created later than the other. If $y$ was created later, then every neighbor of $x$ that was created at the same time as $y$ is now a common neighbor of $x$ and $y'$. If $x$ was created later, but
before $t-1$, then every node adjacent to $y$ but not $x$ at the time produced a anti-clone of the type we need. We are left with a case where $x$ was created at time $t-1$, and $y$ was created
earlier.

We want to find a common neighbor of $x$ and $y'$ that was created at $t-2$ or earlier. $x$ was created at time $t-1$, so it was cloned from a node with has either $n/8-2,\ n/16+2$ or about $n/12$
neighbors that already existed at time $t-1$, and so $x$ has either $n/8+2,\ 3n/16-2,$ or about $n/6$ neighbors older than itself. By the same argument, $y'$ has either $n/8+2,\ 3n/16-2,$ or about
$n/6$ neighbors at least as old as $t-2$. There are in total $n/4$ nodes at least as old as $t-2.$ So by the pigeonhole principle, they must have such a neighbor in common. \qed \end{proof}

Notice that the number of pairs such that both $x$ and $y$ belong to $G_0$ is negligible, so will not change the average distance limit. We can conclude that: $$\lim_{k\rightarrow\infty}L_t= 1.6.$$

We next turn to a brief discussion of the domination and cop numbers of the ILAT graphs. As we have noticed with other parameters such a the diameter and average distance, these two parameters are
bounded above by very small constants. For more on these graph parameters, see \cite{bonato} (we omit their definitions here as they are well-known and owing to space constraints). As a possible
interpretation of these, we note that in networks exhibiting high anti-transitivity, a few important nodes emerge (either dominating nodes, or mobile agents represented by cops) which can reach all
other nodes. Such so-called \emph{superpower} nodes organically emerge as important actors in the network.

\begin{theorem}
In $G_t$ such that $t\geq 3$, the domination number is $3$.
\end{theorem}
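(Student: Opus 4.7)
The plan is to prove the bounds $\gamma(G_t) \leq 3$ and $\gamma(G_t) \geq 3$ separately. The upper bound comes from an explicit 3-element dominating set; the lower bound proceeds by contradiction.

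For the upper bound, fix any $x \in V(G_{t-2})$, which exists since $t \geq 3$. Let $x_1$ be the anti-clone of $x$ added at time $t-1$ and $x_2$ the anti-clone of $x$ added at time $t$; I claim $S = \{x, x_1, x_2\}$ dominates $G_t$. I split $V(G_t)$ into four disjoint pieces: $\{x\}$, the time-$(t-1)$ neighbors $N_{t-1}(x)$, the time-$(t-1)$ non-neighbors $V(G_{t-1}) \setminus N_{t-1}[x]$, and the new anti-clones $V(G_t) \setminus V(G_{t-1})$. The first two are dominated by $x$ (since $\deg_t(x) = n_{t-1}-1$), and the third is exactly the open neighborhood of $x_2$ at time $t$, hence dominated by $x_2$. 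For a new anti-clone $y'$ with $y \in V(G_{t-1})$: if $y \notin N_{t-1}[x]$ then $y' \sim x$ at time $t$ by the model; if $y = x$ then $y' = x_2 \in S$; and if $y \in N_{t-1}(x)$ then $y' \sim x_1$. The last claim holds because $N_{t-1}(x_1) \subseteq V(G_{t-2}) \setminus N_{t-2}[x]$, while every $y \in N_{t-1}(x)$ either lies in $N_{t-2}(x) \subseteq N_{t-2}[x]$ or is a time-$(t-1)$ anti-clone, so $y \notin N_{t-1}(x_1)$, giving $x_1 \sim y'$.

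For the lower bound, suppose for contradiction that $\{u,v\}$ dominates $G_t$. Since every node of $G_t$ has degree at most $n_{t-1}-1$ and $|V(G_t)| = 2n_{t-1}$, we must have $|N_t[u]| = |N_t[v]| = n_{t-1}$ and $N_t[u] \cap N_t[v] = \emptyset$; in the principal case, $u, v \in V(G_{t-1})$. Three observations then pin down the structure: the anti-clone $u' \in V(G_t) \setminus V(G_{t-1})$ cannot be dominated by $u$, so $u' \sim v$, forcing $u \not\sim_{t-1} v$; any common neighbor $w$ of $u,v$ in $G_{t-1}$ would have its anti-clone $w'$ at time $t$ dominated by neither, forcing $N_{t-1}(u) \cap N_{t-1}(v) = \emptyset$; and every $y \in V(G_{t-1})$ must be covered, forcing $N_{t-1}[u] \cup N_{t-1}[v] = V(G_{t-1})$. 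Together these say $\{N_{t-1}[u], N_{t-1}[v]\}$ partitions $V(G_{t-1})$.

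The main obstacle is ruling out such a closed-neighborhood partition in $G_{t-1}$. The plan is an inductive descent: a partition of $V(G_s)$ by the closed neighborhoods of two vertices of $V(G_{s-1})$ is equivalent, via the same anti-clone argument at time $s$, to an analogous partition of $V(G_{s-1})$. Iterating this equivalence from $s = t-1$ reduces the situation to the smallest time-steps, producing a condition incompatible with $t \geq 3$. The subtlest case involves pairs in which one vertex is the anti-clone of a vertex isolated at the previous stage, since such anti-clones also attain the maximum closed-neighborhood size; this is handled by a short extra argument using that $G_s$ has no isolated vertices for $s \geq 2$ (provided $G_0$ has no universal vertex), together with the independent-set structure of the anti-clones at time $t$.
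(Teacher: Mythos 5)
Your upper bound is correct and complete, and it is essentially the paper's argument in a slightly different costume: you dominate with $\{x,x_1,x_2\}$ where $x_1,x_2$ are anti-clones of the same parent at consecutive times, while the paper uses $\{x,x',(x')'\}$, a chain of anti-clones. Both verifications are the same style of case check and both sets work.

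The lower bound is where the problem lies, and the comparison with the paper is instructive. The paper dismisses dominating sets of size $2$ in two lines by asserting that every degree is at most $\frac{n_t}{2}-2$, so two closed neighborhoods cover at most $n_t-2$ vertices. That degree formula is off by one: a vertex $x\in V(G_{t-1})$ is adjacent in $G_t$ to exactly one member of each pair $\{y,y'\}$ with $y\neq x$, so $\deg_t(x)=n_{t-1}-1=\frac{n_t}{2}-1$ (check Figure 2: the original $C_4$-vertices have degree $3$ in $G_1$, not $2$). You implicitly use the correct value, and you correctly conclude that counting alone only forces $N_t[u]$ and $N_t[v]$ to partition $V(G_t)$, which must then be excluded by your descent. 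But the descent's terminal claim --- that iterating always ``produces a condition incompatible with $t\geq 3$'' --- is not merely unproven; it is false for some initial graphs, so the gap cannot be closed as stated. Take $G_0=\overline{K_2}$ on $\{a,b\}$. Then $G_1$ is the perfect matching with edges $ab'$ and $a'b$, and $N_1[a]=\{a,b'\}$, $N_1[b]=\{b,a'\}$ partition $V(G_1)$. Such a partition propagates: if $N_s[u]$ and $N_s[v]$ partition $V(G_s)$ with $u,v\in V(G_s)$, then $N_{s+1}[u]=N_s[u]\cup\{w':w\in N_s[v]\}$ and $N_{s+1}[v]=N_s[v]\cup\{w':w\in N_s[u]\}$, which again partition $V(G_{s+1})$. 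Hence $\{a,b\}$ dominates $G_t$ for every $t$, and the domination number is $2$, not $3$; the same happens for $G_0=K_1$. Your proviso ``provided $G_0$ has no universal vertex'' does not exclude $\overline{K_2}$. So the theorem itself needs an additional hypothesis on $G_0$ (ruling out an initial closed-neighborhood partition), and any correct lower-bound proof must invoke it; your descent is the right mechanism but must bottom out in such a hypothesis rather than in a contradiction. To your credit, it is precisely your more careful counting that exposes this issue, which the paper's one-line argument hides behind the erroneous degree bound.
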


\begin{proof}Let $A=\{x, x', (x')'\}$ be as follows. For any $1\leq k\leq t-1$, let $x$ be a node that existed at time $k-1$ and $x'$ be the time-$k$ anti-clone of $x$. Let $x''$ be the
time-$(k+1)$ anti-clone of $x'$. Then any node of $G_t$ not in $A$ is either adjacent to $x'$, adjacent to $x''$, or a node created at time $k+1$ that is not adjacent to $x'$, in which case it must be
adjacent to $x$. Therefore, $A$ is a dominating set of $G_t$.

If $t\geq 1$, then we can never find a dominating set of size 2. The node degrees are bounded by $\frac{n_t}{2}-2$. Therefore the union of neighborhoods of any two nodes contains at most $n_t-4$
nodes. \qed \end{proof}

\begin{theorem}\label{copc}
If $t\geq 2$, then the cop number of $G_t$ is 2.
\end{theorem}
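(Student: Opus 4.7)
The plan is to prove cop number $\geq 2$ and cop number $\leq 2$ for $t \geq 2$.

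For the lower bound, I would use the Nowakowski--Winkler--Quilliot characterization: a graph has cop number $1$ if and only if it is dismantlable, i.e.\ it has a \emph{corner} --- a vertex $v$ with $N[v] \subseteq N[u]$ for some $u \neq v$. I would show $G_t$ has no corner. The obstruction comes from the independent set of anti-clones created at time $t$: if $v$ is old (in $V(G_{t-1})$), then the neighbors of $v$ in $G_t$ include the new anti-clone of every non-neighbor of $v$ in $G_{t-1}$, and since these new anti-clones are mutually non-adjacent, a dominating $u$ cannot itself be a new anti-clone. Hence $u$ is old, and unwinding $N_{G_t}[u] \supseteq N_{G_t}[v]$ back to $G_{t-1}$ forces $N_{G_{t-1}}[u] = N_{G_{t-1}}[v]$, which an inductive argument rules out. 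The case where $v$ is a newly created anti-clone is handled symmetrically using $N(v) \subseteq V(G_{t-1})$ together with the same independent-set obstruction.

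For the upper bound, I would reuse the dominating triple from the proof of the domination-number theorem, specialized to $k = t-1$: let $x \in V(G_{t-2})$, let $x'$ be the time-$(t-1)$ anti-clone of $x$, and let $x''$ be the time-$t$ anti-clone of $x'$, so that $\{x, x', x''\}$ dominates $G_t$. The key extra structural fact that permits dropping from three cops to two is that $x$ and $x''$ are adjacent in $G_t$: since $xx' \notin E(G_{t-1})$, we have $x \in N^c_{G_{t-1}}(x')$, and by construction $x''$ is adjacent to precisely the vertices of $N^c_{G_{t-1}}(x')$. I would place $C_1$ at $x'$ and $C_2$ at $x''$. If the robber starts in $N[x']$ or $N[x'']$, a cop captures on the first move; otherwise the dominating property forces $r \in N[x] \setminus (N[x'] \cup N[x''])$, and because $x \in N[x'']$ we actually have $r \in N(x)$. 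Cop $C_2$ now uses the edge $xx''$ to step onto $x$, becoming adjacent to the robber, while $C_1$ remains at $x'$ to guard $N[x']$.

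The principal obstacle is the chase phase --- one must show the robber cannot indefinitely oscillate between residual shadow regions. A direct computation shows that the only vertices in $V(G_t) \setminus (N[x] \cup N[x'])$ the robber can actually flee to from its current position in $N(x)$ are the time-$(t-1)$ anti-clones $y'$ with $y \in N_{G_{t-2}}(x)$, and each such anti-clone is adjacent to $x''$. Hence whenever the robber flees there, $C_2$ steps back from $x$ to $x''$ and re-threatens. The symmetric attempted escape on the $x''$-side is blocked by $C_1$: by the pigeonhole argument used in the proof of Theorem~\ref{diam}, any two vertices of $G_{t-2}$ share a common non-neighbor (since degrees in $G_{t-2}$ are at most $n_{t-3}-1$), and this non-neighbor lies in $N(x')$ and is adjacent to the robber's hiding place, giving $C_1$ a one-step route to catch. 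Together these constraints shrink the robber's feasible region each round, yielding capture in a bounded number of rounds.
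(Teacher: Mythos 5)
Your upper bound diverges from the paper's strategy at exactly the point that matters, and the chase phase --- which you correctly flag as the principal obstacle --- does not close: as described, the robber can evade forever. After $C_2$ steps from $x''$ to $x$, your cops occupy the pair $(x,x')$ where $x'$ is the \emph{time-$(t-1)$} anti-clone of $x$. That pair does not dominate $V(G_{t-1})$: every time-$(t-1)$ anti-clone $y'$ with $y\in N_{G_{t-2}}(x)$ is adjacent to neither $x$ (since $y\notin N^c(x)$) nor $x'$ (same-generation anti-clones form an independent set). Now take $y\in N_{G_{t-2}}(x)$ and $z\in N_{G_{t-2}}(y)\setminus N_{G_{t-2}}[x]$ (such a pair exists, e.g.\ via a vertex at distance $2$ from $x$ in $G_{t-2}$). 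The time-$t$ anti-clone $z^{(t)}$ is a legal robber start (adjacent to $x$ but to neither $x'$ nor $x''$), and $z^{(t)}$ and $y'$ are adjacent because $z\in N_{G_{t-2}}[y]$. The robber simply oscillates: cops at $(x',x)$ threaten $z^{(t)}$, robber moves to $y'$, which is adjacent to neither $x$ nor $x'$; cops return to $(x',x'')$ to threaten $y'$, robber moves back to $z^{(t)}$, which is adjacent to neither $x'$ nor $x''$. Your feasible region does not shrink; it cycles with period two. The pigeonhole remark about $C_1$ finding a common non-neighbor does not rescue this, because once $C_1$ leaves $x'$ it stops guarding $N[x']$ and the robber's escape set reopens on that side; you never verify that both threats can be maintained simultaneously.

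The paper's proof avoids this by always keeping the cops on a pair consisting of an old vertex and its \emph{time-$t$} anti-clone: start on $(v,v^{(t)})$ for any $v\in V(G_{t-1})$, which dominates all of $V(G_{t-1})$ and confines the robber to a time-$t$ anti-clone $u^{(t)}$; then shift both cops to $(w,w^{(t)})$ for some $w\in V(G_{t-1})$ adjacent to $u^{(t)}$ (which exists since $u$ is not universal in $G_{t-1}$ for $t\ge 2$, and is reachable in one round). Since every neighbor of a time-$t$ anti-clone lies in $V(G_{t-1})$, which the new pair dominates, the robber is caught on its next move. The repair to your argument is essentially to move $C_1$ from $x'$ to the time-$t$ anti-clone of $x$ (they are adjacent) at the same moment $C_2$ moves to $x$; this recreates the paper's configuration. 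Separately, your lower bound via ``$G_t$ has no corner'' cannot work for all initial graphs: if $G_0$ contains true twins (e.g.\ $G_0=K_3$), an easy induction shows the twins have equal closed neighborhoods in every $G_t$, so each is a corner of $G_t$. Cop-win is equivalent to full dismantlability, not to the existence of a single corner, so you would need to show $G_t$ is not dismantlable (for instance by retracting the persistent twins first), not merely corner-free.
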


\begin{proof} In a simple, omitted argument, if $t\ge 2,$ the cop number of $G_t$ is never 1. We now describe how two cops can may capture the robber. Fix $v\in V(G_{t-1})$. Then each vertex of $G_{t-1}$ is adjacent to one of $v$ or $v'.$ Place the cops on $v$ and $v'$. Hence, the robber must begin on an anti-clone say $u'$ newly created at time $t$ not adjacent to either $v$ or $v'$. Now there must be an $x$ in $G_t$ joined to $u'$, otherwise, $u$ is a universal vertex in $G_{t-1}$ which is a contradiction (here is where we use $t\ge 2).$  It is straightforward to show that there is a perfect matching between $x,\ x'$ and $v, \ v'$, and so the cops move to $x, \ x'$. The robber must move to a vertex $z$ in $G_{t-1}$. But $z$ is joined to one of $x$ or $x'$ and the robber is caught in the next move. \qed \end{proof}

\bigskip
Note that we must have $t\ge 2$ in Theorem~\ref{copc} or the cop number could be larger than 2. For example, if $G_0$ is a $K_3,$ then $G_1$ is the disjoint union of $K_3$ and $\overline{K_3}$, which has cop number 4.


\section{Clustering coefficient}\label{cluster}

For a node $v$, define $c_t(v)$ to be the (local) clustering coefficient of the node $v$ at time $t.$ We note that in the ILAT model, older nodes exhibit significant local clustering over time.

\begin{theorem}\label{ccc}
Let $k\in \mathbb{N}$. For node $v$ created at time $k$, with $t> k$, if $\lim_{t\rightarrow\infty}c_t(v)$ exists, then we have that
$$\lim_{t\rightarrow\infty} c_t(v)=0.4.$$
\end{theorem}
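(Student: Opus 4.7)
The plan is to mimic the alternative derivation at the end of Section~\ref{model}: partition the graph around $v$, track edge counts within each block, derive one-step recurrences, pass to densities, and solve the resulting fixed-point system under the hypothesis that the limit exists. Partition $V(G_t)\setminus\{v\}$ into $A_t=N_t(v)$ and $C_t=V(G_t)\setminus(A_t\cup\{v\})$, and write $f_t$, $h_t$, $g_t$ for the number of edges of $G_t$ inside $A_t$, inside $C_t$, and between $A_t$ and $C_t$, respectively. By the degree analysis in Section~\ref{model} we have $|A_t|,|C_t|=n_t/2+O(1)$, hence $|A_t||C_t|\sim n_t^2/4$ and $\binom{|A_t|}{2}\sim\binom{|C_t|}{2}\sim n_t^2/8$; note that $c_t(v)=\alpha_t:=f_t/\binom{|A_t|}{2}$ is exactly the quantity of interest.

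Next I would unpack the ILAT step on this partition. The new neighborhood $N_{t+1}(v)$ is $A_t$ together with the anti-clones $\{w':w\in C_t\}$ (an independent set), with $uw'$ an edge iff $uw\notin E(G_t)$; the new set $C_{t+1}$ is $C_t$ together with the anti-clones of $A_t$ and the single vertex $v'$. Careful bookkeeping of the new edges---accounting for the factor of two arising because each old non-edge $\{u,u_0\}\subseteq A_t$ gives both $uu_0'$ and $u_0u'$ as new bipartite edges, and for the boundary contribution of $v'$, which attaches to all of $C_t$---yields
\[
f_{t+1}=f_t+|A_t||C_t|-g_t,\qquad h_{t+1}=h_t+|A_t||C_t|-g_t+|C_t|,
\]
\[
g_{t+1}=g_t+2\!\left(\binom{|A_t|}{2}-f_t\right)+2\!\left(\binom{|C_t|}{2}-h_t\right).
\]

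Dividing by the leading-order counts and setting $\beta_t=h_t/\binom{|C_t|}{2}$ and $\gamma_t=g_t/(|A_t||C_t|)$, I obtain
\[
\alpha_{t+1}=\tfrac{1}{4}\alpha_t+\tfrac{1}{2}(1-\gamma_t)+o(1),\quad \beta_{t+1}=\tfrac{1}{4}\beta_t+\tfrac{1}{2}(1-\gamma_t)+o(1),
\]
\[
\gamma_{t+1}=\tfrac{1}{4}(\gamma_t+2-\alpha_t-\beta_t)+o(1).
\]
The first two recurrences differ only in $\alpha_t$ versus $\beta_t$, so $\alpha_t-\beta_t\to 0$. Assuming $\alpha_t\to\alpha$ as in the hypothesis forces $\beta_t\to\alpha$ and $\gamma_t\to\gamma$ for some $\gamma$, and the fixed-point equations $\alpha=\alpha/4+(1-\gamma)/2$ together with $\gamma=(\gamma+2-2\alpha)/4$ have unique solution $\alpha=\gamma=2/5$, giving $\lim_{t\to\infty}c_t(v)=0.4$.

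The main obstacle I anticipate is the edge bookkeeping in the one-step identities: classifying each new anti-clone correctly into $A_{t+1}$ versus $C_{t+1}$, correctly accounting for the factor of two in the cross-edges arising from the matching between old vertices and their anti-clones on the same side, and tracking the boundary term $|C_t|$ contributed by $v'$. Once these are set up, the fixed-point computation is elementary linear algebra, and the answer $2/5$ reassuringly matches the global limiting density of Theorem~\ref{limit}, as one would hope for a ``typical'' neighborhood.
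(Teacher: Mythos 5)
Your proposal is correct and follows essentially the same route as the paper: decompose the edges relative to $v$ into neighborhood, non-neighborhood, and cross classes, derive the one-step linear recurrences $c_{t+1}\sim c_t/4+(1-c_t'')/2$ and $c_{t+1}''\sim (c_t''+2-c_t-c_t')/4$ from the ILAT update, and solve the resulting fixed-point system under the convergence hypothesis to get $2/5$. The only difference is how the third density is handled: the paper eliminates the non-neighborhood density via the global limiting density $2/5$ from Theorem~\ref{limit}, while you derive its own (symmetric) recurrence and use the contraction $\alpha_{t+1}-\beta_{t+1}=\tfrac14(\alpha_t-\beta_t)+o(1)$ to conclude it shares the limit of $c_t(v)$; both close the system, and yours has the minor advantage of being self-contained.
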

Hence, the clustering coefficient of a node $v$ tends to $0.4$ as $v$ grows old, which matches the density of the graph.
\bigskip

\noindent \emph{Proof of Theorem~\ref{ccc}.} Let $c_t'(v)=c_t'$ be the density of $v$'s non-neighbor-hood set at time $t,$ and let $c_t''(v)=c_t''$ be the density between the neighborhood and the non-neighborhood of $v$. Hence, if $\deg_t(v)$ is the degree of $v$
at time $t$ and $n$ is the number of vertices at time $t$, then the number of edges with both endpoints in the neighborhood of $v$ is $c_t(v){{\deg_t(v)} \choose 2}$, the number of edges with both endpoints in the non-neighborhood of $v$ is
$c'_t{{n-\deg(v)-1} \choose 2}$, the number of edges with one endpoint in the neighborhood of $v$, and the remaining number of edges in the non-neighborhood of $v$ is $c''_t\deg(v)(n-\deg(v)-1)$.

For large $t$, we may approximate the degree by $\deg_t(v)\sim n-\deg(v)-1\sim \frac{n}{2}.$ Further, since the total number of edges in the graph tends to $0.4{n\choose 2}$, we have that $$\frac{c_t+c't+2c''_t}{4}\sim
\frac{2}{5},$$ and
$$c_t'\sim \frac{8}{5}-c_t-2c_t'' .$$ Then we may determine $c_{t+1}(v)=c_{t+1}$ by counting the edges with both endpoints in the neighborhood of $v$ at time $t+1$. These are either the same edges that
contributed to $c_t(v)$, or edges between the $t$-time neighborhood of $v$ and the anti-clones of its non-neighborhood, giving the following equations:
\begin{eqnarray*}
c_{t+1}{n\choose 2}&\sim &c_t{{n/2}\choose 2}+(1-c_t'')\frac{n^2}{4}, \\
c_{t+1}&\sim &\frac{c_t}{4}+\frac{1-c_t''}{2}.
\end{eqnarray*}
Further, we have that
\begin{eqnarray*}
c_{t+1}''&=&\frac{c_t''}{4}+\frac{1-c_t'}{4}+\frac{1-c_t}{4} \\
c_{t+1}''&=&\frac{c_t''}{4}+\frac{1-\frac{2}{5}+c_t(v)+2c_t''}{4}+\frac{1-c_t}{4}, \text{ and}\\
c_{t+1}''&=&\frac{3c_t''+\frac{2}{5}}{4} .
\end{eqnarray*}
By hypothesis, the limiting value of $c_t$ exists and we call this quantity $c$. In particular, we have that for a sufficiently large $t$ that,
$c_t(v)\sim c_{t+1}\sim c_{t+1} \sim c.$
We have that
$$c_{t+2}=\frac{c_{t+1}}{4}+\frac{1-c_{t+1}''}{2}= \frac{c_{t+1}}{4}+\frac{3}{4}\frac{1-c_t''}{2}+\frac{1-\frac{2}{5}}{8},$$
and so $c_{t+2}=c_{t+1}-\frac{3c_t}{16}+\frac{3}{40}.$ By taking the limit as $t\rightarrow \infty$, we have that $\frac{3}{16}c = \frac{3}{40},$ and the result follows.
\qed
\bigskip

We present bounds on the (global) clustering coefficient of $G_t$, denoted $C_t.$ Note that the clustering coefficients here are less than 0.4 (that is, the limiting graph density) and so the ILAT graphs have lower clustering coefficients than in binomial random graphs with the same average degree (unlike in the ILT model; see \cite{ilt}). The proof of this result is omitted and will be presented in the full version of the paper.

\begin{theorem}\label{tcluster}
For $t$ sufficiently large, we have that $$0.1100<C_t<0.1244. $$
\end{theorem}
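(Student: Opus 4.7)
The plan is to reduce the problem to a finite-dimensional linear recurrence on the joint distribution of induced subgraphs on $3$-sets. For $i\in\{0,1,2,3\}$, let $T_i^{(t)}$ denote the number of unordered $3$-subsets of $V(G_t)$ spanning exactly $i$ edges. Since $\sum_v\binom{\deg_t(v)}{2}=T_2^{(t)}+3\,T_3^{(t)}$, the global clustering coefficient is a fixed rational function of $(T_2^{(t)},T_3^{(t)})$, so the whole problem reduces to tracking these four counts up to the precision needed for the claimed interval.

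To derive the recurrence I would classify each $3$-subset of $V(G_{t+1})$ by how many of its vertices are newly created anti-clones. Apart from $O(n_t^2)$ ``degenerate'' triples that contain both an old vertex and its own anti-clone, each triple of $G_{t+1}$ arises from a triple $\{x,y,z\}$ of distinct old vertices of $G_t$ by cloning a chosen subset, yielding $1+3+3+1=8$ descendant triples per old triple (which matches $\binom{n_{t+1}}{3}/\binom{n_t}{3}\to 8$). The bookkeeping step is to compute the edge count of each descendant in terms of the edge count $i$ of $\{x,y,z\}$ and the within-triple degrees of the cloned vertices: cloning a single vertex $z$ of within-triple degree $d_z$ flips the two edges incident to $z$ while preserving the opposite edge, producing a descendant with $i+2-2d_z$ edges; cloning two vertices and keeping one old vertex $w$ leaves only the cross-edges at $w$, giving $2-d_w$ edges; and cloning all three yields an independent triple, since anti-clones are pairwise non-adjacent.

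Aggregating these cases yields a $4\times 4$ integer transition matrix $M$ with $(T_0,T_1,T_2,T_3)^{(t+1)}=M(T_0,T_1,T_2,T_3)^{(t)}+O(n_t^2)$ and with every column summing to $8$. Solving the eigenvalue problem for $M$ identifies $8$ as the top eigenvalue; the corresponding right eigenvector, normalized so its entries sum to $1$, yields the limiting proportions $p_i:=\lim_{t\to\infty}T_i^{(t)}/\binom{n_t}{3}$, and substituting into the formula for $C_t$ determines its limit $C_\infty$ in closed form. The other eigenvalues of $M$ all have modulus strictly less than $8$, which makes the convergence of $T_i^{(t)}/\binom{n_t}{3}$ to $p_i$ geometric.

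The main obstacle will be upgrading this asymptotic value into the explicit two-sided bound $0.1100<C_t<0.1244$. To do this I would (i) compute the subdominant eigenvalue of $M$ explicitly to obtain a sharp geometric rate of convergence together with the sign with which $C_t$ approaches $C_\infty$ (determined by the leading transient, whose coefficient depends on $G_0$ through its projection onto the subdominant eigenspace); (ii) bound the $O(n_t^2)$ degenerate contribution, which is $O(1/n_t)$ after normalization; and (iii) combine (i) and (ii) into an effective estimate $|C_t-C_\infty|\le \gamma\rho^t+O(1/n_t)$ with explicit constants $\gamma,\rho$. Choosing $t$ large enough that this deviation fits inside the narrow tolerance between $C_\infty$ and the interval's endpoints then completes the proof; the delicate part is obtaining sharp enough constants in step (i), since the tolerance is small relative to typical transient constants.
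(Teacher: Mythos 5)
The paper omits its own proof of this theorem, so I can only judge your proposal on its own terms; there is a genuine gap, and it lies not in the technical steps (i)--(iii) you flag as delicate but in the choice of invariant. Your triple-census recurrence itself is correct: the flip rules you state give a well-defined $4\times 4$ transition matrix because on three vertices the within-triple degree multiset is determined by the edge count $i$. Carrying your plan out, the columns of $M$ (source $i=0,1,2,3$) are $(2,0,6,0)$, $(1,5,1,1)$, $(3,2,3,0)$, $(4,3,0,1)$; the matrix is primitive with Perron eigenvalue $8$, and the normalized Perron vector is $(p_0,p_1,p_2,p_3)=(19/70,\,21/70,\,27/70,\,3/70)$. (Sanity checks: the expected number of edges per triple is $84/70=6/5=3\cdot\frac{2}{5}$, matching Theorem~\ref{limit}, and $T_2+3T_3\sim\frac{36}{70}\binom{n_t}{3}$ matches $\sum_v \binom{\deg_t(v)}{2}$ computed from the degree distribution of Section~\ref{model}.) But then the quantity your method computes, the transitivity ratio $3T_3/(T_2+3T_3)$, converges to $\frac{9/70}{36/70}=\frac{1}{4}=0.25$, which lies outside the claimed interval $(0.1100,0.1244)$. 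So your argument, completed, would disprove the theorem rather than prove it --- which tells you that the $C_t$ of the statement cannot be the wedge-closure ratio.

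Indeed, in this line of work (see the definition of $c_t(v)$ at the start of Section~\ref{cluster}, and the clustering coefficient used for the ILT model in \cite{ilt}) the ``global'' clustering coefficient is the Watts--Strogatz average $C_t=\frac{1}{n_t}\sum_v c_t(v)$ of the local coefficients. That quantity is not a function of the triple census $(T_0,\dots,T_3)$: it carries the per-vertex normalization $e(N(v))/\binom{\deg_t(v)}{2}$, so one needs the joint distribution over vertices of degree and triangle count, and in ILAT graphs this varies sharply across the age classes (a $2^{-k}$ fraction of vertices in class $k$, with degrees $a_k n_t$ as in Section~\ref{model}, and with local clustering far from the old-vertex limit $0.4$ for small $k$ --- note that half of all vertices are brand-new anti-clones). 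A proof along your lines would have to track, for each age class, both its degree and the edge density inside its neighbourhood, essentially running the $(c_t,c_t',c_t'')$ bookkeeping from the proof of Theorem~\ref{ccc} class by class and summing with weights $2^{-k}$; the two-sided bound in the statement, rather than a single limit, is itself a hint that the resulting sequence need not converge to one value. Your census computation is still worth keeping --- it rigorously pins down the transitivity of ILAT graphs at $1/4$ --- but it is not the quantity asserted here.
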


\section{Spectral expansion}

For a graph $G=(V,E)$ and sets of nodes $X,Y \subseteq V$, define $E(X,Y)$ to be the set of edges in $G$ with one endpoint in $X$ and the other in $Y.$ For simplicity, we write $E(X)=E(X,X).$ The normalized Laplacian of a graph relates to important graph properties; see \cite{sgt} for a reference. Let $A$ denote the adjacency matrix and $D$ denote the
diagonal degree matrix of a graph $G$. Then the normalized Laplacian of $G$ is $\mathcal{L} = I - D^{-1/2}AD^{-1/2}.$ Let $0 = \lambda_0 \leq \lambda_1 \leq \cdots \leq \lambda_{n-1} \leq 2$ denote
the eigenvalues of $\mathcal{L}$. The \emph{spectral gap} of the normalized Laplacian is defined as
$$
\lambda = \max\{ |\lambda_1 - 1|, |\lambda_{n-1} - 1| \}.
$$
A spectral gap bounded away from zero is an indication of bad expansion properties, which is characteristic for social networks; see \cite{estrada}. The next theorem represents a drastic departure
from the good expansion found in binomial random graphs, where $\lambda = o(1)$; see~\cite{sgt,CL}.
\begin{theorem}\label{thm:expa}
$\lambda \ge 3/5 + o(1).$
\end{theorem}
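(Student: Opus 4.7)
The plan is to prove the stronger statement $\lambda_{n-1} \ge 8/5 + o(1)$ by exhibiting an explicit test vector, from which $\lambda \ge |\lambda_{n-1} - 1| \ge 3/5 + o(1)$ immediately follows. The main tool is the Rayleigh characterization of the largest eigenvalue of the normalized Laplacian,
$$\lambda_{n-1} = \max_{g \not\equiv 0} \frac{\sum_{xy \in E(G_t)} (g(x) - g(y))^2}{\sum_{v \in V(G_t)} \deg_t(v)\, g(v)^2},$$
so any nonzero $g : V(G_t) \to \R$ produces a lower bound on $\lambda_{n-1}$.

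The natural test set to use is $X \subseteq V(G_t)$, the set of anti-clones added at time $t$. By definition of the ILAT model, $X$ is an independent set of size $n_{t-1}$, and every edge incident to $X$ has its other endpoint in $\bar X := V(G_{t-1})$, so $e(X,\bar X) = \vol(X)$. First I would compute the two volumes using Theorem~\ref{limit}. Since each anti-clone attached at time $t$ to a parent $v$ has degree ${\deg^c}_{t-1}(v) = n_{t-1} - 1 - \deg_{t-1}(v)$, summing over $v$ yields
$$\vol(X) = n_{t-1}(n_{t-1}-1) - 2 e_{t-1} = \left(\tfrac{3}{5} + o(1)\right) n_{t-1}^2,$$
where the asymptotic uses $e_{t-1}/\binom{n_{t-1}}{2} \to 2/5$. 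On the other side, each old node of $G_{t-1}$ has degree $n_t/2 - 2 = n_{t-1} - 2$ in $G_t$, so $\vol(\bar X) = n_{t-1}(n_{t-1}-2) = (1+o(1))\, n_{t-1}^2$.

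Next I would take $g$ to be constant on each part: set $g \equiv a := -\vol(\bar X)/\vol(X)$ on $X$ and $g \equiv 1$ on $\bar X$ (incidentally also making $\sum_v \deg_t(v) g(v) = 0$). Edges inside $X$ are absent and edges inside $\bar X$ contribute zero to the numerator, so only crossing edges count: the numerator is $\vol(X)(a-1)^2$ and the denominator is $a^2 \vol(X) + \vol(\bar X)$. Writing $V = \vol(X)$, $W = \vol(\bar X)$ and substituting $a = -W/V$, the Rayleigh quotient telescopes cleanly to
$$R(g) = \frac{V+W}{W} = 1 + \frac{V}{W} = 1 + \frac{3}{5} + o(1) = \frac{8}{5} + o(1).$$
Hence $\lambda_{n-1} \ge 8/5 + o(1)$, and $\lambda \ge \lambda_{n-1} - 1 \ge 3/5 + o(1)$ as claimed. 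There is no real obstacle here — the argument is a clean Rayleigh-quotient calculation — and the only care needed is to track the $o(1)$ errors coming from the density convergence in Theorem~\ref{limit}. The conceptual content is simply that the newly added anti-clones form an independent set of large volume (roughly $3/8$ of the total volume), and this alone forces bad expansion.
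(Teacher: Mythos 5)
Your proof is correct, and it rests on exactly the same structural observation as the paper's: take $X$ to be the $n_t/2$ anti-clones created at time $t$, note that $X$ is independent, and compute $\vol(X) \sim \frac{3}{20}n_t^2$ and $\vol(\bar{X}) \sim \frac{1}{4}n_t^2$ from the limiting density $2/5$; the bound $3/5$ is $\vol(X)/\vol(\bar{X})$ in both arguments. The difference is in how that observation is converted into an eigenvalue bound. The paper invokes the expander mixing lemma for the normalized Laplacian as a black box, applied with $e(X,X)=0$, which yields $\lambda \ge \vol(X)/\vol(\bar{X})$ but does not say which end of the spectrum is responsible. You instead plug a two-valued test vector (constant $-\vol(\bar{X})/\vol(X)$ on $X$, constant $1$ on $\bar{X}$) into the Rayleigh quotient for $\lambda_{n-1}$; this is in effect a self-contained proof of the one instance of the mixing lemma that is needed, and it buys the slightly sharper conclusion $\lambda_{n-1} \ge 8/5 + o(1)$, locating the bad eigenvalue at the top of the spectrum --- which is the natural signature of a graph containing an independent set carrying a constant fraction ($3/8$) of the total volume. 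Your algebra checks out: with $V=\vol(X)$, $W=\vol(\bar{X})$ and $e(X,\bar{X})=V$, the quotient is indeed $1+V/W$. One cosmetic remark: the degree of an old node in $G_t$ is $n_t/2-1$ rather than $n_t/2-2$ (a node is adjacent to exactly one member of every parent/anti-clone pair except its own), but this off-by-one, which the paper also carries, is absorbed by the $o(1)$ and affects nothing.
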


To prove Theorem~\ref{thm:expa}, we use the expander mixing lemma for the normalized Laplacian (see~\cite{sgt} for its proof). For sets of nodes $X$ and $Y$ we use the notation $\vol(X) = \sum_{v \in
X} \deg(v)$ for the volume of $X$, $\bar{X} = V \setminus X$ for the complement of $X$, and, $e(X,Y)$ for the number of edges with one end in each of $X$ and $Y.$ (Note that $X \cap Y$ does not have
to be empty; in general, $e(X,Y)$ is defined to be the number of edges between $X \setminus Y$ to $Y$ plus twice the number of edges that contain only nodes of $X \cap Y$. In particular, $e(X,X) = 2
|E(X)|$.)
\begin{lemma}\label{mix}
For all sets $X \subseteq G,$
\[
\left| e(X,X) - \frac{(\vol(X))^{2}}{\vol(G)} \right| \leq \lambda \frac{\vol(X)\vol(\bar{X})}{\vol(G)}.
\]
\end{lemma}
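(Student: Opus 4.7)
The plan is to prove Lemma~\ref{mix} by expanding an appropriate indicator vector in the orthonormal eigenbasis of the normalized Laplacian $\mathcal{L}$, isolating the zero-eigenvalue contribution as the main term $(\vol(X))^2/\vol(G)$, and bounding the remaining spectral contribution by $\lambda$ times the total $\ell_2$ mass of the non-trivial components.

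First, I would write $e(X,X) = \mathbf{1}_X^{T} A \mathbf{1}_X$, which is valid because, under the paper's convention, edges inside $X$ are counted twice. Substituting $A = D^{1/2}(I - \mathcal{L})D^{1/2}$ and setting $f = D^{1/2}\mathbf{1}_X$, this becomes
\[
e(X,X) = f^{T}(I - \mathcal{L})f = f^{T}f - f^{T}\mathcal{L}f,
\]
with $f^{T}f = \vol(X)$. Next, let $\phi_0, \phi_1, \dots, \phi_{n-1}$ be an orthonormal eigenbasis of $\mathcal{L}$ with eigenvalues $0 = \lambda_0 \leq \cdots \leq \lambda_{n-1}$, and note that one can take $\phi_0 = D^{1/2}\mathbf{1}/\sqrt{\vol(G)}$, since $\mathcal{L}(D^{1/2}\mathbf{1}) = 0$. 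Expanding $f = \sum_i \alpha_i \phi_i$, I would compute
\[
\alpha_0 = \langle f, \phi_0 \rangle = \frac{\vol(X)}{\sqrt{\vol(G)}},
\qquad \sum_{i \geq 0} \alpha_i^{2} = \vol(X),
\]
so that $\sum_{i \geq 1} \alpha_i^{2} = \vol(X) - \vol(X)^{2}/\vol(G) = \vol(X)\vol(\bar{X})/\vol(G)$.

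Now I would combine these pieces. Since $\lambda_0 = 0$, the spectral expansion yields $f^{T}\mathcal{L}f = \sum_{i \geq 1}\lambda_i \alpha_i^{2}$, hence
\[
e(X,X) - \frac{(\vol(X))^{2}}{\vol(G)} = \vol(X) - \alpha_0^{2} - \sum_{i \geq 1}\lambda_i \alpha_i^{2} = \sum_{i \geq 1}(1-\lambda_i)\alpha_i^{2}.
\]
Taking absolute values and pulling $|1-\lambda_i|$ out of the sum gives
\[
\left| e(X,X) - \frac{(\vol(X))^{2}}{\vol(G)} \right| \leq \max_{i \geq 1}|1-\lambda_i|\sum_{i \geq 1}\alpha_i^{2} = \lambda \cdot \frac{\vol(X)\vol(\bar{X})}{\vol(G)},
\]
where the last equality uses the definition $\lambda = \max\{|\lambda_1 - 1|,|\lambda_{n-1} - 1|\}$ together with the monotone ordering of the eigenvalues, which guarantees $|1 - \lambda_i| \leq \lambda$ for every $i \geq 1$.

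The main obstacle, and the only place where care is needed, is the bookkeeping at the start: getting the identity $e(X,X) = \mathbf{1}_X^{T}A\mathbf{1}_X$ to match the paper's definition (the factor of $2$ on $|E(X)|$ is exactly what is needed so that no further correction appears in the quadratic form), and verifying that $\phi_0 = D^{1/2}\mathbf{1}/\sqrt{\vol(G)}$ is indeed a unit $0$-eigenvector of $\mathcal{L}$. Once those are in place, the rest is a clean spectral calculation, and no additional tools beyond orthonormal expansion are required.
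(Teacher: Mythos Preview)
Your argument is correct and is precisely the standard spectral proof of the expander mixing lemma for the normalized Laplacian. The paper itself does not prove Lemma~\ref{mix}; it simply quotes the statement and refers the reader to Chung's monograph~\cite{sgt} for the proof, so there is no discrepancy to report---your write-up is essentially the argument one finds in that reference.
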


\bigskip

\noindent \emph{Proof of Theorem~\ref{thm:expa}.} Let $X$ be the set of $n/2$ the youngest nodes. Since $X$ induces an independent set, we note that $e(X,X)=0$. We derive that
\begin{eqnarray*}
\vol(G) &\sim& 2n^2 / 5, \\
\vol(\bar{X}) &\sim& n^2 / 4, \quad \text{and}\\
\vol(X) &=& \vol(G) - \vol(\bar{X}) \sim 3 n^2 / 20,
\end{eqnarray*}
where the second expression holds as $(n/2)$-many of the oldest nodes have degree $\sim n/2$. Hence, by Lemma~\ref{mix}, we have that
$$
\lambda \ge \frac{(\vol(X))^{2}}{\vol(G)} \cdot \frac{\vol(G)}{\vol(X)\vol(\bar{X})} =  \frac{\vol(X)}{\vol(\bar{X})} \sim 3/5,
$$
and the proof follows. \qed

\section{Discussion and future work}\label{conc}

We introduced the Iterated Local Anti-Transitivity (ILAT) model for complex networks and analyzed properties of the graphs it generates. We proved that graphs generated by ILAT densify over time,
have diameter 3, and have density tending to 0.4. ILAT graphs have small dominating sets and low cop number. We analyzed the clustering coefficient of ILAT graphs, and noted that while older nodes show high (local) clustering, the (global) clustering coefficient is less than what is expected in binomial random graphs with the same expected degree. In addition, we showed that graphs generated by ILAT exhibit bad spectral
expansion as found in social networks.

Theoretical results presented here for the ILAT model are suggestive of several emergent properties in networks where anti-transitivity governs link formation. For instance, the presence of small
(3-element) dominating sets suggest the emergence of nodes we describe as \emph{superpowers}, which have broad influence in the network. Such nodes may emerge naturally in real-world networks which are highly
anti-transitive, owing to a high number of alliances against common adversaries. Similarly, the presence of short paths, high density, and high (local) clustering of older nodes in ILAT graphs suggests that networks, where
common adversaries forge alliances, naturally form tight-knit communities that are well-connected. In the sequel, it would be interesting to empirically test these hypotheses with real-world networked
data.

Besides applications of the ILAT model, it raises a number of interesting graph-theoretic questions. An open problem remains to compute the exact clustering coefficient for ILAT graphs. Another
question is to determine the induced subgraph structure of such graphs. A characterization of the induced subgraphs of ILAT graphs (that is, to determine its \emph{age}) remains open. For example, do
all finite trees appear as induced subgraphs of ILAT graphs?

\end{document}